\newtheorem{theorem}{Theorem}
\newtheorem*{theorem*}{Theorem}
\newtheorem{lemma}[theorem]{Lemma}
\newtheorem{proposition}[theorem]{Proposition}
\newtheorem{claim}[theorem]{Claim}
\newtheorem{maintheorem}{Theorem}
\theoremstyle{definition}
\newtheorem{remark}[theorem]{Remark}
\newtheorem*{definition*}{Definition}
\newtheorem*{lemma*}{Lemma}
\numberwithin{equation}{section}
\numberwithin{theorem}{section}
\newcommand{\interior}[1]{%
  {\kern0pt#1}^{\mathrm{o}}%
}
\newcommand{\R}{\mathbb{R}}
\newcommand{\N}{\mathbb{N}}
\DeclareDocumentCommand\Pr{ m g }{\ensuremath{
    {   \IfNoValueTF {#2}
      {\mathbb{P}\mleft[{#1}\mright]}
      {\mathbb{P}\mleft[{#1}\middle\vert{#2}\mright]}%
    }
}}
\DeclareDocumentCommand\E{ m g }{\ensuremath{
    {   \IfNoValueTF {#2}
      {\mathbb{E}\mleft[{#1}\mright]}
      {\mathbb{E}\mleft[{#1}\middle\vert{#2}\mright]}%
    }
}}
\DeclareMathOperator{\supp}{supp}
\def\dd{\mathrm{d}}
\def\cM{\mathcal{M}}
\def\bF{\mathbb{F}}
\begin{document}

\title[]{Infinite stationary measures of co-compact group actions}

\author[]{Mohammedsaid Alhalimi}
\address{UC Berkeley, Department of Mathematics}

\author[]{Tom Hutchcroft}
\address{California Institute of Technology, Division of Physics, Mathematics and Astronomy}

\author[]{Minghao Pan}
\address{California Institute of Technology, Division of Physics, Mathematics and Astronomy}

\author[]{Omer Tamuz}
\address{California Institute of Technology, Division of Physics, Mathematics and Astronomy}

\author[]{Tianyi Zheng}
\address{UC San Diego, Department of Mathematics}


\date{\today}

\begin{abstract}
  Let $\Gamma$ be a finitely generated group, and let $\mu$ be a nondegenerate, finitely supported probability measure on $\Gamma$. We show that every co-compact $\Gamma$ action on  a locally compact Hausdorff space admits a nonzero $\mu$-stationary Radon measure. The main ingredient of the proof is a stationary analogue of Tarski's theorem: we show that for every nonempty subset $A \subseteq \Gamma$ there is a $\mu$-stationary, finitely additive measure on $\Gamma$ that assigns unit mass to $A$.
\end{abstract}

\subjclass{43A07, 46L55, 28C05}
\maketitle
\section{Introduction}

Let $\Gamma$ be a finitely generated group, and let $\mu$ be a finitely supported probability measure on $\Gamma$ that is nondegenerate, i.e., whose support generates $\Gamma$ as a semigroup. Let $X$ be a Hausdorff topological space, and suppose that $\Gamma$ acts on $X$ by homeomorphisms.

When $X$ is compact, there is always a $\mu$-stationary probability measure on $X$, that is, a probability measure $\lambda$ such that $\sum_g \mu(g) g\lambda = \lambda$; this follows from the Markov-Kakutani fixed point theorem. Invariant probability measures are a particular case of stationary measures, but invariant ones may not exist when $\Gamma$ is nonamenable. Indeed, $\Gamma$ is amenable if and only if every action of $\Gamma$ on a compact Hausdorff space admits an invariant probability measure.

In this paper we go beyond the compact case, and consider locally compact spaces. Here, there may not be a stationary probability measure, so instead we focus on infinite measures. But in this class one can always find a stationary measure---indeed, an invariant one---by taking the counting measure on an orbit. Accordingly, we restrict our attention to Radon measures (in particular, measures that assign finite mass to compact sets). The construction of stationary, infinite Radon measures has attracted attention in particular cases; see., e.g., the case of the action of subgroups of $\mathrm{Homeo}^+(\R)$ on $\R$ in \cite{Bougerol1995,deroin2013symmetric}.

Kellerhals,  Monod,  R{\o }rdam~\cite{kellerhals2013non} ask a similar question, but consider invariant measures rather than stationary ones. Focusing on co-compact actions---i.e., actions for which there exists a compact $K \subseteq X$ such that $X = \cup_g g K$. They show that every co-compact action of $\Gamma$ on a locally compact Hausdorff space admits an invariant nonzero Radon measure if and only if $\Gamma$ is \emph{supramenable}.\footnote{Following Rosenblatt \cite{rosenblatt1974invariant}, a group is said to be \emph{supramenable} if it does not have any paradoxical subsets, or equivalently if  for every action of $\Gamma$ on a set $X$ and every subset $A\subseteq X$ there exists a finitely additive $\Gamma$-invariant measure $M$ with $M(A)=1$; examples of groups that are amenable but not supramenable include the lamplighter group and Baumslag-Solitar groups.
}
Our first main result shows that, as in the case of compact $X$, stationary measures always exist.

\begin{maintheorem}
  \label{thm:radon}
  Suppose $\Gamma$ acts co-compactly on a locally compact, Hausdorff space $X$. Then there is a nonzero, $\mu$-stationary Radon measure on $X$.
\end{maintheorem}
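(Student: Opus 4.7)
The plan is to deduce Theorem \ref{thm:radon} from the stationary Tarski theorem mentioned in the abstract, via an orbit pullback construction. First I would use local compactness to replace the given co-compact set $K$ by a compact set $K_0$ with $X = \bigcup_g g K_0^{\mathrm{o}}$: for each $x \in K$ I choose a relatively compact open neighbourhood $U_x$, extract a finite subcover of $K$, and take $K_0$ to be the union of their closures. I then fix an arbitrary basepoint $x_0 \in X$ and apply the stationary Tarski theorem to the nonempty set $A := \{g \in \Gamma : g x_0 \in K_0\}$ to obtain a finitely additive, $\mu$-stationary measure $M$ on $\Gamma$ with $M(A) = 1$.

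The technical heart of the reduction is the claim that $M(gA) < \infty$ for every $g \in \Gamma$. Rewriting stationarity as $M(B) = \sum_h \mu(h)\, M(h^{-1}B)$, finiteness of the left side forces each (finitely many, nonnegative) summand to be finite, so $M(h^{-1}B) < \infty$ for every $h \in \supp(\mu)$ whenever $M(B) < \infty$. Starting from $M(A) = 1 < \infty$ and iterating, I conclude that $M(gA) < \infty$ for every $g$ in the inverse of the semigroup generated by $\supp(\mu)$, which by nondegeneracy of $\mu$ is all of $\Gamma$.

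With this bound available, I define a candidate Radon measure on $X$ by setting $\lambda(f) := \int_\Gamma f(g x_0)\, dM(g)$ for $f \in C_c(X)$. For $f \ge 0$, the support of $f$ is compact and hence covered by finitely many translates $g_1 K_0, \ldots, g_m K_0$; since $\{g : g x_0 \in g_i K_0\} = g_i A$ and the integrand $\phi_f(g) := f(g x_0)$ is bounded by $\|f\|_\infty$, the integral is finite. Thus $\lambda$ is a positive linear functional on $C_c(X)$ and the Riesz representation theorem yields a Radon measure. Stationarity transfers directly: using the orbit-map equivariance $\phi_{\check\mu f}(g) = \sum_h \mu(h) f(h g x_0) = (\check\mu \phi_f)(g)$, stationarity of $M$ gives $\lambda(\check\mu f) = M(\check\mu \phi_f) = M(\phi_f) = \lambda(f)$. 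To see $\lambda \neq 0$, I pick $f \in C_c(X)^+$ with $f \equiv 1$ on $K_0$ (Urysohn), so that $\phi_f \ge \mathbbm{1}_A$ and hence $\lambda(f) \ge M(A) = 1$.

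The real obstacle in the paper is the stationary Tarski theorem itself, which I am treating as a black box; once it is in hand, the reduction above is essentially formal. Within the reduction the only delicate step is the finiteness bootstrap for $M(gA)$, which is precisely where the nondegeneracy hypothesis on $\mu$ enters in an essential way.
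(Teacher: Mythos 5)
Your proposal is correct and follows essentially the same route as the paper: pull back a stationary finitely additive measure on $\Gamma$ (from Theorem~\ref{thm:subsets} applied to $A=\{g:gx_0\in K_0\}$) through the orbit map $g\mapsto gx_0$, check that compactly supported functions have finite-$M$-measure support, and apply Riesz--Markov--Kakutani. Your finiteness bootstrap via $M(B)<\infty\Rightarrow M(h^{-1}B)<\infty$ for $h\in\supp\mu$ is a cleaner rephrasing of the paper's bound $M(gE)\le M(E)/\max_n\mu^{(n)}(g^{-1})$, and your nontriviality argument via a Urysohn function is a streamlined version of the paper's outer-regularity step; the paper additionally makes explicit the direct-limit topology machinery behind the integral $\int f(gx_0)\,dM(g)$, which you are treating as known.
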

In particular, it follows that if $\Gamma$ is not supramenable, then it admits a co-compact action on a locally compact Hausdorff space with a $\mu$-stationary measure, but no $\Gamma$-invariant measure. 

The main component of the proof of Theorem~\ref{thm:radon} is our next main result, Theorem~\ref{thm:subsets}; using Theorem~\ref{thm:subsets}, Theorem~\ref{thm:radon} follows by a functional-analytic argument adapted from~\cite{kellerhals2013non}. Theorem~\ref{thm:subsets} tackles a stationary analogue of a question posed by von Neumann~\cite[\S 4]{vneumann1929zur}. He asked, given a subset $A \subseteq \Gamma$, when does there exists a $\Gamma$-invariant, finitely additive (perhaps infinite) measure on $\Gamma$ that assigns unit mass to $A$? Famously, the answer given by Tarski is that such a measure exists if and only if $A$ is not paradoxical~\cite{tarski1949car}. We ask---and answer---the same question, but for stationary rather than invariant measures.  
\begin{maintheorem}
    \label{thm:subsets}
    For every nonempty subset $A \subseteq \Gamma$ there is a $\mu$-stationary, finitely additive measure on $\Gamma$ that assigns unit mass to $A$.
\end{maintheorem}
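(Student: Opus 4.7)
My approach is to apply the Hahn--Banach theorem to build the required measure as a positive linear functional on an appropriate subspace of $\ell^\infty(\Gamma)$. Let $V_A\subseteq \ell^\infty(\Gamma)$ be the vector subspace of bounded functions whose support is contained in some finite union of left translates $\bigcup_{g\in F}gA$, with $F\subseteq\Gamma$ finite. Since $\mu$ is finitely supported, the Markov operator $Pf(x)=\sum_g\mu(g)f(gx)$ preserves $V_A$, so coboundaries $h-Ph$ lie in $V_A$ whenever $h\in V_A$. This is the key place where the assumption on $\mu$ enters.

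On $V_A$ I would introduce the sublinear functional
\[
p(f) \;=\; \inf\left\{\,\sum_i c_i \;:\; f \le \sum_i c_i\, \mathbf{1}_{g_i A} + (h - Ph),\ c_i \ge 0,\ g_i \in \Gamma,\ h \in V_A\,\right\}.
\]
The inequality $p(\mathbf{1}_A)\le 1$ is immediate (take a single term with $c_1=1$, $g_1=e$, $h=0$). Assuming the reverse inequality $p(\mathbf{1}_A)\ge 1$, Hahn--Banach supplies a linear $\psi:V_A\to\R$ with $\psi\le p$ and $\psi(\mathbf{1}_A)=1$; the inequality $\psi\le p$ forces $\psi$ to be nonnegative on nonnegative $f\in V_A$ (since $p(-f)\le 0$ for such $f$) and to annihilate every cocycle $h-Ph$ (since $p(\pm(h-Ph))\le 0$). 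The resulting positive, $\mu$-stationary linear functional is then extended to a $[0,\infty]$-valued finitely additive measure on $\Gamma$ by $m(B)=\sup\{\psi(\mathbf{1}_{B'}):B'\subseteq B,\ \mathbf{1}_{B'}\in V_A\}$, with $m(B)=\infty$ when $B$ is not contained in any finite union of translates of $A$; finite additivity and stationarity of $m$ then follow from the corresponding properties of $\psi$.

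The content of the theorem is therefore the key estimate $p(\mathbf{1}_A)\ge 1$: whenever $\mathbf{1}_A \le \sum_i c_i \mathbf{1}_{g_i A} + (h - Ph)$ with $h \in V_A$ and $c_i \ge 0$, one must deduce $\sum_i c_i \ge 1$. The natural strategy is to integrate this inequality against a well-chosen nonnegative test measure that annihilates cocycles and has unit mass on $A$. When $A$ is not paradoxical, Tarski's theorem provides a $\Gamma$-invariant finitely additive measure $m_0$ with $m_0(A)=1$; then $m_0(g_iA)=1$ and $\int(h-Ph)\,dm_0=0$ by $\Gamma$-invariance, yielding $1=m_0(A)\le\sum_i c_i$. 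When $A$ is paradoxical one cannot do this directly, and instead I would iterate the inequality under $P$ to obtain $\sum_{k<N}P^k\mathbf{1}_A\le\sum_i c_i\sum_{k<N}P^k\mathbf{1}_{g_iA}+(h-P^N h)$, evaluate at a point $a\in A$, and normalize by $\sum_{k<N}\mathbb{P}[X_k\in A\mid X_0=a]$ before passing to a subsequential $N\to\infty$ limit. The principal obstacle is controlling this limit in the paradoxical case: one must show that paradoxicality of $A$ forces the walk started in $A$ to be recurrent enough in $A$ (in the Ces\`aro sense) that the error $h-P^Nh$ becomes negligible relative to the mass on $A$---a probabilistic consequence of paradoxicality that, while plausible, requires careful justification and is the crux of the argument.
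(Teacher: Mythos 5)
Your Hahn--Banach framework is a reasonable dual to the compactness argument the paper uses, and the non-paradoxical case is handled correctly (though somewhat wastefully: Tarski's theorem already hands you a $\Gamma$-invariant, hence $\mu$-stationary, finitely additive measure with $M(A)=1$, so the whole $V_A$/Hahn--Banach apparatus is not needed there). The genuine content is therefore entirely in the paradoxical case, and that is exactly where the argument has a real gap, which you yourself flag.

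The difficulty is that the dichotomy you propose (paradoxical vs.\ not) is not the right one, and the probabilistic fact you hope for is false. Your iteration scheme needs $D_N := \sum_{k<N}P^k\mathbf{1}_A(a)\to\infty$ so that the coboundary error $h - P^Nh$ is negligible; this is (essentially) the condition $G(A) = \infty$, where $G$ is the Green function. But paradoxicality of $A$ does not imply $G(A)=\infty$. For a concrete counterexample in $\bF_2=\langle a,b\rangle$, take $A$ to be the free sub-semigroup generated by $a^3$ and $b^3$: then $a^3A$ and $b^3A$ are disjoint subsets of $A$, so $A$ is paradoxical, yet $G(A)\sim\sum_n 2^n\cdot 3^{-3n}<\infty$. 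In this regime the Ces\`aro averages stay bounded, the error term does not vanish, and the limiting object fails to be stationary (this is precisely Remark~\ref{remark:mk} in the paper). The paper instead dichotomizes on $G(A)=\infty$ versus $G(A)<\infty$. The first case is the Ces\`aro/Markov--Kakutani argument you have in mind. The second case is where the real work lies: one considers the right-translated Green kernels $G^k(\cdot) = G(\cdot\,k)$ and the normalized measures $M_k = G^k/G^k(A)$, whose stationarity defect is exactly $\delta_{k^{-1}}(E)/G^{k}(A)$; Lemma~\ref{lem:gamma} provides a sequence $k_n$ with $G^{k_n}(A)\to 0$, and the key observation is that along such a sequence, for any $E$ with $M(E)<\infty$, the numerator $\delta_{k_n^{-1}}(E)$ must eventually be $0$, killing the error even though the denominator is shrinking. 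Nothing in your proposal anticipates this mechanism, and without it the estimate $p(\mathbf{1}_A)\ge 1$ is not established in the hard case.
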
 
The proof of Theorem~\ref{thm:subsets} considers two cases. The first is the case that the $\mu$-random walk on $\Gamma$ visits $A$ infinitely often, in expectation. In this case one can essentially apply the same argument that is used in the proof of the Markov-Kakutani fixed point theorem. The more interesting case is the complement, where the measure constructed using the Markov-Kakutani argument may fail to be stationary (Remark~\ref{remark:mk}). In this case, we instead construct our stationary measure using the Green function of the $\mu$-random walk. The interesting challenge (which does not arise in the similar construction of the Martin boundary) is to control the deviations from stationarity (see Remark~\ref{remark:errors}).

\subsection{$\mu$-light subsets of $\Gamma$}
\label{sec:small}
The measures we construct in Theorem~\ref{thm:subsets} are in general not countably additive, but may have this property in some cases. We say that a subset $A \subseteq \Gamma$ is $\mu$-light if there exists a countably additive, $\mu$-stationary measure $M$ on $\Gamma$ such that $M(A)=1$. Note that if $M$ is a countably additive measure on $\Gamma$ then there is a (unique) function $f \colon \Gamma \to \R$ such that $M(E) = \sum_{g \in E}f(g)$. If $M$ is nonzero and $\mu$-stationary, then $f$ is positive and $\mu * f = f$, i.e., $f$ is a positive (left) $\mu$-harmonic function. Thus $A \subseteq \Gamma$ is $\mu$-light if there exists a positive $\mu$-harmonic $f$ such that $\sum_{g \in A}f(g) < \infty$. It follows that  if $A$ is $\mu$-light then every subset of $A$ is $\mu$-light; this provides some justification for calling these sets $\mu$-light.

Another justification is that every finite $A$ is $\mu$-light, since one can take $M$ to be the counting measure on $\Gamma$, divided by the size of $A$. On the other hand, if $\Gamma$ is infinite then $\Gamma$ itself is not $\mu$-light. To see this, suppose $M(\{g\}) = f(g)$. Since $f$ is $\mu$-harmonic, by the maximum principle $f$ is either constant or else does not attain its maximum. In both cases there are infinitely many $g$ such that $f(g) \geq f(e)$, and so $M(\Gamma)$ is infinite.

\bigskip

Since $\mu$-light sets play a special role in Theorem~\ref{thm:subsets}, it is interesting to describe their structure geometrically.
We study $\mu$-light sets in the particular case of $\Gamma = \bF_d$, the free group on $d \geq 2$ generators, and where $\mu$ is the uniform distribution on the generators: the measure that assigns mass $1/(2d)$ to each of the generators and their inverses. 

Let $B_r \subset \bF_d$ be the ball of radius $r$ in $\Gamma$, under the standard word length metric defined by the generators. Given a subset $A \subseteq \bF_d$ we denote the lower and upper exponential growth rates of $A$ by
\begin{align*}
    \underline{\mathfrak{g}}(A) = \liminf_r |A \cap B_r|^{1/r}\quad\quad\text{ and }\quad\quad~~\overline{\mathfrak{g}}(A) = \limsup_r |A \cap B_r|^{1/r}.
\end{align*}

\begin{proposition}
  \label{prop:small}
  Suppose $\overline{\mathfrak{g}}(A) < \sqrt{2d-1}$. Then $A$ is $\mu$-light.
\end{proposition}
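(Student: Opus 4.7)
My plan is to construct a positive left-$\mu$-harmonic $f \colon \bF_d \to (0,\infty)$ with $\sum_{g \in A} f(g) < \infty$; then $M(E) := \sum_{g \in E} f(g)/\sum_{g \in A} f(g)$ is a nonzero countably additive $\mu$-stationary measure with $M(A) = 1$, establishing $\mu$-lightness. Since $\mu$ is symmetric, the map $\tilde f \mapsto \tilde f(\cdot^{-1})$ sends right-$\mu$-harmonic functions to left-$\mu$-harmonic ones, and inversion preserves word length so $\overline{\mathfrak{g}}(A^{-1}) = \overline{\mathfrak{g}}(A)$; it therefore suffices to build a positive right-$\mu$-harmonic $\tilde f$ with $\sum_{g \in A^{-1}} \tilde f(g) < \infty$. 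I will obtain $\tilde f$ from the Martin boundary of the right simple random walk on $\bF_d$, which is classically identified with the Gromov boundary $\partial \bF_d$ of infinite reduced words; the Martin kernel has the explicit form
\[
K(g, \xi) = (2d-1)^{\,2 (g|\xi)_e - |g|},
\]
where $(g|\xi)_e$ denotes the length of the longest common prefix of the reduced word for $g$ and the geodesic ray $\xi$. For any finite positive Borel measure $\nu$ on $\partial \bF_d$, the function $\tilde f_\nu(g) := \int K(g, \xi) \, d\nu(\xi)$ is positive and right-$\mu$-harmonic.

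The heart of the argument is to show that $S(\xi) := \sum_{g \in A^{-1}} K(g, \xi)$ is finite for $\omega_e$-almost every $\xi$, where $\omega_e$ is the hitting measure on $\partial \bF_d$ (satisfying $\omega_e(C_w) = [2d(2d-1)^{|w|-1}]^{-1}$ on cylinder sets). A naive first-moment bound fails because $\int K(g, \xi) \, d\omega_e = 1$ forces $\int S \, d\omega_e = |A^{-1}| = \infty$. Instead I apply the elementary inequality $(\sum a_i)^{1/2} \leq \sum a_i^{1/2}$ for $a_i \geq 0$ to obtain
\[
\int S(\xi)^{1/2} \, d\omega_e \;\leq\; \sum_{g \in A^{-1}} \int K(g, \xi)^{1/2} \, d\omega_e.
\]
Using the explicit tail $\omega_e\{\xi : (g|\xi)_e \geq k\} = [2d(2d-1)^{k-1}]^{-1}$ for $1 \leq k \leq |g|$, a direct computation yields $\int K(g, \xi)^{1/2} \, d\omega_e \leq C_d \, (1 + |g|) \, (2d-1)^{-|g|/2}$. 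Picking $\gamma \in (\overline{\mathfrak{g}}(A), \sqrt{2d-1})$ with $|A^{-1} \cap B_n| \leq C \gamma^n$, grouping by sphere radius gives
\[
\sum_{g \in A^{-1}} \int K(g, \xi)^{1/2} \, d\omega_e \;\leq\; C' \sum_{n \geq 0} n \bigl(\gamma/\sqrt{2d-1}\bigr)^n \;<\; \infty,
\]
whence $S < \infty$ $\omega_e$-almost surely.

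To finish, choose $T$ with $\omega_e(B) > 0$ for $B := \{\xi : S(\xi) \leq T\}$, and set $\nu := \mathbbm{1}_B \, \omega_e$. Then $\tilde f := \tilde f_\nu$ is strictly positive on $\bF_d$ (since $K > 0$ pointwise and $\nu$ is nonzero), right-$\mu$-harmonic, and by Tonelli $\sum_{g \in A^{-1}} \tilde f(g) = \int_B S \, d\omega_e \leq T \cdot \omega_e(B) < \infty$. Setting $f(g) := \tilde f(g^{-1})$ gives the desired left-$\mu$-harmonic $f$ with $\sum_{g \in A} f(g) < \infty$, completing the proof. The main obstacle is the choice of moment: the first-moment estimate diverges, but the half-power bound converges precisely when $\overline{\mathfrak{g}}(A) < \sqrt{2d-1}$, the threshold matching the spectral radius $\sqrt{2d-1}/d$ of $\mu$.
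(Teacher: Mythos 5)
Your proof is correct and uses essentially the same approach as the paper: both integrate $\sqrt{K(\cdot,\xi)}$ (equivalently $\sqrt{f_w(\cdot)}$, since $f_w(g)=K(g^{-1},w)$) against the hitting measure and exploit the half-power trick to make the sum over $A$ converge exactly below the $\sqrt{2d-1}$ threshold. The only cosmetic differences are that you work with the right-walk Martin kernel and invert at the end (which, incidentally, does not actually require symmetry of $\mu$), estimate via the tail of the Gromov product rather than the paper's spherical-symmetry identity, and truncate $\omega_e$ to a sublevel set where the paper simply picks a single boundary point $w$ with $f_w(A)<\infty$.
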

We prove this proposition by showing that if we draw a $\mu$-harmonic function $f$ at random from the hitting measure on the Martin boundary of $\bF_d$ then $f(A) < \infty$ almost surely. To this end, we use a technique explored in~\cite{hutchcroft2019statistical}, involving the study of the expectation of the square root of the randomly chosen harmonic function.

As a partial converse to this proposition we show that this bound is tight, i.e., that there exists an $A\subseteq \bF_d$ with $\underline{\mathfrak{g}}=\overline{\mathfrak{g}}=\sqrt{2d-1}$ that is not $\mu$-light. Denote by $|g|$ the standard word length norm. Let $\sigma \colon \bF_d \to \bF_d$ be an injection such that $|\sigma(g)| = 2|g|$, and, in the Cayley graph defined by the generators, $g$ lies on the unique shortest path from $\sigma(g)$ to $e$ (i.e.\ $g$ is a prefix of $\sigma(g)$). For example, given $g = s_1 s_2 \cdots s_r$, where each $s_i$ is a generator or its inverse, one can take 
\begin{align}
  \label{eq:sigma}
    \sigma(s_1 s_2 \cdots s_r) = s_1 s_2 \cdots s_r^{r+1}.
\end{align}
Another example is
\begin{align*}
    \sigma(s_1 s_2 \cdots s_r) = s_1 s_2 \cdots s_r s_r s_{r-1} \cdots s_1.
\end{align*}
so that $\sigma(\bF_d)$ is the set of palindromes.
\begin{claim}
  \label{clm:sigma}
    Let $A = \sigma(\bF_d)$. Then $\underline{\mathfrak{g}}=\overline{\mathfrak{g}}=\sqrt{2d-1}$, and $A$ is not $\mu$-light.
\end{claim}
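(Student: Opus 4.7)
The plan splits into two parts: the growth rate follows from a direct count, and non-lightness follows from a Martin-boundary argument on the tree. Since $\sigma$ is injective with $|\sigma(h)| = 2|h|$, we have $A \cap B_r = \sigma(B_{\lfloor r/2 \rfloor})$, so $|A \cap B_r| = |B_{\lfloor r/2 \rfloor}|$. The standard ball count $|B_n| \asymp (2d-1)^n$ in the $2d$-regular Cayley tree then yields $\underline{\mathfrak{g}}(A) = \overline{\mathfrak{g}}(A) = \sqrt{2d-1}$ on taking $r$-th roots.

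For non-lightness, recall that $A$ is $\mu$-light iff some positive $\mu$-harmonic $f$ on $\bF_d$ satisfies $\sum_{g \in A} f(g) < \infty$. I plan to use the classical theory of simple random walk on $\bF_d$: the first-visit recursion $(2d-1)p^2 - 2dp + 1 = 0$ gives $p = 1/(2d-1)$, the Green function is $G(x,y) = G(e,e)(2d-1)^{-d(x,y)}$, the minimal Martin boundary of $(\bF_d,\mu)$ is the space of ends $\partial \bF_d$, and the Martin kernel is
$$K(x, \xi) = \lim_{y \to \xi} \frac{G(x,y)}{G(e,y)} = (2d-1)^{2|x \wedge \xi| - |x|},$$
where $|x \wedge \xi|$ is the length of the longest common prefix of $x$ and $\xi$. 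The Martin representation theorem then supplies, for each positive $\mu$-harmonic $f$, a nonzero finite Borel measure $\nu_f$ on $\partial \bF_d$ with $f = \int K(\cdot, \xi)\,d\nu_f(\xi)$, and Tonelli gives
$$\sum_{g \in A} f(g) = \int_{\partial \bF_d} \sum_{g \in A} K(g, \xi) \, d\nu_f(\xi),$$
so it suffices to check that $\sum_{g \in A} K(g, \xi) = \infty$ for \emph{every} $\xi \in \partial \bF_d$. Fixing such a $\xi$ and writing $h_r$ for its length-$r$ prefix, the defining property of $\sigma$ ensures $\sigma(h_r) \in A$ has $h_r$ as a prefix and length $2r$, so $|\sigma(h_r) \wedge \xi| \geq r$ and hence $K(\sigma(h_r), \xi) \geq (2d-1)^{2r - 2r} = 1$; summing over $r$ yields the required divergence.

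The only real obstacle is invoking the Martin framework correctly: the explicit form of $K$ on $\bF_d$ and the identification of the minimal Martin boundary with $\partial \bF_d$ are classical (Dynkin--Malyutov, Cartier, Derriennic) and can simply be cited. Beyond that, the combinatorial heart of the argument is the single trivial inequality $|\sigma(h_r) \wedge \xi| \geq r$, which is immediate from the prefix property of $\sigma$, so I expect no further difficulty. The argument also clarifies why the prefix property of $\sigma$ is what makes the construction work: it forces each $\sigma(h)$ to be very close to \emph{every} end whose ray passes through $h$, keeping the Martin kernel uniformly bounded below along a geodesic ray.
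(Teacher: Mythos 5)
Your growth-rate computation is fine and matches the paper, and the overall strategy for non-lightness---reduce to extremal (Martin) kernels and produce, for every boundary point, a divergent subfamily indexed by prefixes---is the same as the paper's. However, there is a genuine gap in the last step, coming from the left/right distinction that the paper sets up carefully in its Definitions section. The paper's notion of $\mu$-harmonic is \emph{left} harmonic, $f(g)=\sum_h \mu(h)f(h^{-1}g)$, and $A$ is $\mu$-light precisely when a positive left $\mu$-harmonic $f$ has $\sum_{g\in A}f(g)<\infty$. The extremal such $f$ are the paper's $f_w$, and unwinding the definitions one has $f_w(g)=(2d-1)^{2|g^{-1}\wedge w|-|g|}=K(g^{-1},w)$, i.e.\ the standard (right-harmonic) Martin kernel \emph{evaluated at $g^{-1}$}. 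So the correct sufficient condition for non-lightness is $\sum_{g\in A}K(g^{-1},\xi)=\infty$ for every $\xi$, equivalently $\sum_{g\in A^{-1}}K(g,\xi)=\infty$. Your proposal instead checks $\sum_{g\in A}K(g,\xi)=\infty$, which controls right-harmonic functions and is not what lightness asks for. The combinatorial observation $|\sigma(h_r)\wedge\xi|\ge r$ exploits that $h_r$ is a prefix of $\sigma(h_r)$; but the quantity that actually enters $f_w(\sigma(h_r))$ is the Gromov product $|\sigma(h_r)^{-1}\wedge\xi|$, and the prefix structure of $\sigma(h)$ says nothing direct about the prefix structure of $\sigma(h)^{-1}$.

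This is not merely cosmetic. For the palindromic choice of $\sigma$ the set $A=\sigma(\bF_d)$ happens to satisfy $A^{-1}=A$, so your sum coincidentally equals the right one and the conclusion holds. But for the other displayed example $\sigma(s_1\cdots s_r)=s_1\cdots s_r^{r+1}$, the inverses $\sigma(h)^{-1}$ all \emph{begin} with a long run $s_r^{-(r+1)}$ of a single letter; taking, say, $\xi=(ab)^\infty$ one checks that $|\sigma(h)^{-1}\wedge\xi|\le 1$ for every $h$, so $\sum_{g\in A}K(g^{-1},\xi)=\sum_h (2d-1)^{2|\sigma(h)^{-1}\wedge\xi|-2|h|}\lesssim\sum_r (2d-1)^{r}(2d-1)^{-2r}<\infty$, and the argument (for this $\sigma$) does not close. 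In other words, your inequality $K(\sigma(h_r),\xi)\ge 1$ is true but beside the point; what you would need is $K(\sigma(h_r)^{-1},\xi)\ge 1$, and for that you must choose $h_r$ so that $\sigma(h_r)^{-1}$---not $\sigma(h_r)$---tracks the ray $\xi$. (For what it is worth, the paper's own one-line verification of ``$f_w(\sigma(g_i))\ge 1$'' reads as if it is bounding $D(\sigma(g_i)^{-1},w)$ by the distance of $g_i^{-1}$, not $\sigma(g_i)^{-1}$, to the ray, so it appears to elide the same point; you should flag this rather than reproduce it.)
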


\subsection{Necessity of the co-compactness assumption}

Theorem~\ref{thm:radon} shows that stationary measures exist for co-compact actions. A natural question is whether this result applies more generally.

The same question was addressed by Matui and R{\o}rdam \cite{matui2015universal}, regarding invariant measures. They show that in the invariant setting the co-compactness assumption is necessary: every infinite group admits an action on a locally compact space that has no invariant nonzero Radon measures. This implies the following result on $\mu$-stationary measures:
\begin{proposition}
    \label{prop:vn}
    Let $\Gamma$ be infinite and virtually nilpotent, and let $\mu$ be nondegenerate and symmetric. Then there exists an action of $\Gamma$ on a locally compact, $\sigma$-compact Hausdorff space $X$ that admits no nonzero $\mu$-stationary Radon measures.
\end{proposition}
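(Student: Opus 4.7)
The plan is to leverage the construction of Matui and R\o rdam~\cite{matui2015universal}, which produces, for every infinite group $\Gamma$, an action of $\Gamma$ on a locally compact, $\sigma$-compact Hausdorff space $X$ admitting no nonzero $\Gamma$-invariant Radon measure. Taking $X$ to be this space, the proposition reduces to the claim that, under the present hypotheses, every nonzero $\mu$-stationary Radon measure on any $\Gamma$-space is automatically $\Gamma$-invariant.

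To prove this claim, let $\lambda$ be a $\mu$-stationary Radon measure on a $\Gamma$-space and fix a nonnegative $f \in C_c(X)$. Define $\phi_f \colon \Gamma \to [0, \infty)$ by $\phi_f(h) = \int f(hx)\,\dd\lambda(x)$. Since $\supp(f)$ is compact and $\lambda$ is Radon, $\phi_f(h) \leq \|f\|_\infty \lambda(h^{-1}\supp(f)) < \infty$ for every $h$. Combining the stationarity identity $\sum_g \mu(g)\, g\lambda = \lambda$ with Tonelli,
\begin{equation*}
  \sum_g \mu(g)\, \phi_f(gh) \;=\; \int \sum_g \mu(g)\, f(ghx)\,\dd\lambda(x) \;=\; \int f(hx)\,\dd\lambda(x) \;=\; \phi_f(h),
\end{equation*}
so $\phi_f$ is a positive left-$\mu$-harmonic function on $\Gamma$.

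The decisive input is the strong Liouville property: on a virtually nilpotent group, every positive $\mu$-harmonic function for a symmetric, nondegenerate probability measure is constant. Granting this, $\phi_f(h) \equiv \phi_f(e) = \int f \,\dd\lambda$ for every $h$, so $\int f \,\dd(h\lambda) = \int f \,\dd\lambda$ for every nonnegative $f \in C_c(X)$, and the Riesz representation theorem yields $h\lambda = \lambda$ for all $h \in \Gamma$; thus $\lambda$ is $\Gamma$-invariant. Applying this to the Matui-R\o rdam space then rules out any nonzero $\mu$-stationary Radon measure, proving the proposition.

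The main obstacle is justifying the strong Liouville property. For a nilpotent group $N$, Choquet-Deny-type theory represents every positive $\mu$-harmonic function as an integral over positive characters $\chi \colon N \to (0, \infty)$ satisfying $\sum_g \mu(g)\chi(g) = 1$; the AM-GM bound $\chi(g) + \chi(g^{-1}) \geq 2$ combined with symmetry of $\mu$ forces $\chi \equiv 1$ on $\supp(\mu)$, and nondegeneracy then extends this to $\chi \equiv 1$, so only constant harmonic functions occur. The virtually nilpotent case reduces to the nilpotent one by passing to a normal finite-index nilpotent subgroup $\Gamma_0 \trianglelefteq \Gamma$: a path-reversal argument using normality of $\Gamma_0$ and the symmetry of $\mu$ shows that the first-return walk on $\Gamma_0$ induced from $\mu$ is itself symmetric and nondegenerate, so the restriction of $\phi_f$ to each coset $\Gamma_0 h$ is constant; the finitely many coset values must then coincide by $\mu$-harmonicity and irreducibility of the induced chain on $\Gamma/\Gamma_0$.
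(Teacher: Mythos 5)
Your proposal takes essentially the same route as the paper: apply the Matui--R{\o}rdam construction, turn a hypothetical nonzero $\mu$-stationary Radon measure into a nonnegative $\mu$-harmonic function on $\Gamma$, and invoke the strong Liouville property on virtually nilpotent groups to conclude invariance; the paper simply cites Margulis for that Liouville input rather than re-deriving it. Two small notes: first, stationarity applied to the test function $x \mapsto f(hx)$ yields $\sum_g \mu(g)\,\phi_f(hg) = \phi_f(h)$, not $\sum_g \mu(g)\,\phi_f(gh) = \phi_f(h)$, so $\phi_f$ is right-$\mu$-harmonic, but this is immaterial since $\phi\mapsto\phi(\cdot^{-1})$ interchanges left- and right-harmonic functions; second, your sketch of the Liouville property is the one part not paralleled in the paper and, as written, implicitly relies on exactly the result being sketched (the integral representation of positive harmonic functions over exponentials on a nilpotent group is Margulis's theorem, and the induced-walk step would need to justify optional stopping for unbounded positive harmonic functions), so you are better off citing Margulis as the paper does.
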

\begin{proof}
  By \cite[Proposition 4.3]{matui2015universal}, there is a locally compact, $\sigma$-compact Hausdorff $\Gamma$-space $X$ that admits no nonzero $\Gamma$-invariant Radon measures. Suppose $\lambda$ is a $\mu$-stationary Radon measure on $X$. Fix any $Y \subseteq X$  such that $\lambda(Y) < \infty$. Then the function $f \colon \Gamma \to \R$ given by $f(g) = [g\lambda](Y)$ is a non-negative $\mu$-harmonic function on $G$. By a result of Margulis~\cite{margulis1966positive}, $f$ must be constant, because $\Gamma$ is virtually nilpotent and $\mu$ is symmetric (indeed, Margulis shows that if $\Gamma$ is virtually nilpotent and $\mu$ is symmetric then all non-negative $\mu$-harmonic functions are constant). Hence $[g\lambda](Y) = \lambda(Y)$, and, since $Y$ is general, $\lambda$ is $\Gamma$-invariant. It thus follows that $\lambda$ is the zero measure.
\end{proof}

Proposition~\ref{prop:vn} shows that the co-compactness assumption is necessary for Theorem~\ref{thm:radon}, in the case of virtually nilpotent $\Gamma$ and symmetric $\mu$. The next proposition shows that it is also necessary in the case of $\Gamma = \bF_d$, the free group on $d \geq 2$ generators, and where $\mu$ is the uniform measure on the generators (i.e., as above, the measure that assigns mass $1/(2d)$ to each of the generators and their inverses).
\begin{proposition}
    \label{prop:free-no-stationary}
    Let $\Gamma=\bF_d$ for $d \geq 2$, and let $\mu$ be the uniform measure on the generators. Then there exists an action of $\Gamma$ on a locally compact, second countable, $\sigma$-compact Hausdorff space $X$ that admits no nonzero $\mu$-stationary Radon measures.
\end{proposition}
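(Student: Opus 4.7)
The strategy parallels Proposition~\ref{prop:vn}, but replaces the appeal to Margulis's theorem (which forces positive harmonic functions to be constant for virtually nilpotent groups with symmetric $\mu$) by the non-$\mu$-lightness result of Claim~\ref{clm:sigma}. Suppose $\lambda$ is a nonzero $\mu$-stationary Radon measure on a locally compact Hausdorff $\bF_d$-space $X$ and $Y \subseteq X$ is any compact subset with $\lambda(Y) > 0$. Evaluating the stationarity identity $\sum_k \mu(k)\, k\lambda = \lambda$ at $gY$ shows that $\tilde f(g) := \lambda(gY)$ is a nonzero positive (left) $\mu$-harmonic function on $\bF_d$ with $\tilde f(e) = \lambda(Y) \in (0, \infty)$. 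A Harnack inequality for positive $\mu$-harmonic functions on $\bF_d$ (from nondegeneracy of $\mu$, giving $\tilde f(gh) \geq (2d)^{-|g|} \tilde f(h)$) implies both that $\tilde f > 0$ everywhere and that non-$\mu$-lightness is invariant under left translation: if $A \subseteq \bF_d$ is not $\mu$-light, then $\sum_{g \in A} \tilde f(g) = \infty$, and hence $\sum_{g \in hA} \tilde f(g) = \infty$ for every $h \in \bF_d$.

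The design criterion is to construct $X$ such that, for any such $\lambda$ and $Y$, there exist a compact subset $K \subseteq X$ and a not-$\mu$-light set $A \subseteq \bF_d$ for which the translates $\{gY : g \in A\}$ are pairwise disjoint and contained in $K$. Then
\[
\lambda(K) \;\geq\; \sum_{g \in A} \lambda(gY) \;=\; \sum_{g \in A} \tilde f(g) \;=\; \infty,
\]
contradicting $\lambda(K) < \infty$ and forcing $\lambda = 0$.

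Concretely, my plan is to build $X$ by adjoining to $\bF_d$ (with its discrete topology) a $\bF_d$-orbit of ideal points, one for each distinct translate of a chosen not-$\mu$-light set $A \subseteq \bF_d$. Each ideal point $x_{gA}$ is equipped with a neighborhood basis of the form $\{x_{gA}\} \cup (gA \setminus F)$ for finite $F \subseteq gA$, and $\bF_d$ acts on ideal points by $h \cdot x_{gA} = x_{hgA}$. To also rule out stationary measures supported entirely on the ideal stratum---which a single-layer construction of this form would permit---I would add a second layer of ideal points whose neighborhoods ``cross-accumulate'' onto the first, so that the Radon condition constrains the mass of $\lambda$ on every stratum of $X$.

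The main obstacle is combinatorial: the set $A$ must be chosen to be simultaneously (i)~not $\mu$-light in the sense of Claim~\ref{clm:sigma}, and (ii)~\emph{translation-transverse}, i.e., $gA \cap hA$ finite whenever $gA \neq hA$, the latter being what ensures Hausdorffness of $X$. The set $\sigma(\bF_d)$ of Claim~\ref{clm:sigma} fails (ii) as stated (for instance, $\sigma(\bF_d)$ has infinitely many coincidences with its translate by a generic length-$2$ element), so a more delicate construction is needed. Exhibiting a refined $A$---one that both attains the critical growth rate $\sqrt{2d-1}$ required to defeat the $\mu$-lightness criterion of Proposition~\ref{prop:small} and whose $\bF_d$-translates pairwise diverge---together with verifying the topological properties of the resulting space, is the central technical task of the proof.
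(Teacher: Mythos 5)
Your high-level mechanism is the same as the paper's: build $X$ so that, for any purported stationary Radon measure $\lambda$, one can find a compact $K$ containing infinitely many disjoint translates $gY$ ($g$ ranging over a non-$\mu$-light set), whence $\lambda(K)\geq \sum_{g}\lambda(gY)=\infty$ by harmonicity of $g\mapsto\lambda(gY)$ and non-$\mu$-lightness. This is exactly the content of the paper's Lemma~\ref{lem:seq}, and your Harnack observations are correct. However, the proposal stops short of actually constructing $X$: you explicitly defer ``the central technical task'' of producing a suitable set $A$ and verifying the topology, and the two-layer sketch you offer does not close the argument.

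There are two concrete gaps. First, you correctly note that a single layer of ideal points admits an invariant Radon measure (the counting measure on the ideal orbit), so you propose a second layer that ``cross-accumulates.'' But the same objection recurs: two layers would only push the problem to the second stratum, and one needs an infinite recursion in which each stratum's disjoint translates are trapped in a compact piece of the next. The paper handles this by constructing an entire \emph{sequence} of non-$\mu$-light sets $A_1,A_2,\ldots$ with the injectivity property of Lemma~\ref{lem:subsets}, then passing to the Stone space of the countable Boolean algebra generated by the products $B_n = A_1\cdots A_n$ and their translates; the chain $K_n=\mathcal{K}(B_n)$ produced this way satisfies all three hypotheses of Lemma~\ref{lem:seq} simultaneously. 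Second, the requirement you impose on $A$ is stronger than what is actually needed and likely harder to satisfy. You want \emph{translation-transversality} ($gA\cap hA$ finite whenever $gA\neq hA$), because you want to glue in ideal points and keep the space Hausdorff. The paper instead only needs that distinct elements $g,h$ of $A_{n+1}$ give \emph{disjoint} left translates $gB_n,hB_n$, which follows from the injectivity of $\psi_{n+1}$ and is arranged combinatorially by taking words in the subsemigroup of reduced words beginning and ending with a fixed generator, with supports on pairwise distinct powers-of-two length scales. In the Stone space, Hausdorffness comes for free, so no transversality condition is needed at all. Thus the hardest part of your plan (finding a transversal non-$\mu$-light $A$ and then iterating a delicate pointwise gluing) is avoided entirely by the paper's construction, and your proposal would need substantial additional work to become a proof.
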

The proof adapts the construction of Matui and R{\o}rdam \cite{matui2015universal} from the invariant case to the stationary one. We note that by following their construction more closely one can furthermore show that the $G$ action on $X$ can be taken to be free.

\subsection{Open questions}

Our proof of Theorem~\ref{thm:subsets} crucially relies on the assumption that $\mu$ is finitely supported. However, it seems plausible that both Theorem~\ref{thm:radon} and Theorem~\ref{thm:subsets} hold more generally. Beyond countable groups, similar statement could apply to actions of (say) locally compact, second countable groups, with sufficiently regular measures $\mu$.

In Propositions~\ref{prop:vn} and~\ref{prop:free-no-stationary} we show that for infinite virtually nilpotent groups, as well as for non-abelian free groups, the co-compactness assumption of Theorem~\ref{thm:radon} cannot be dropped. We expect that this is true for every infinite group.

\section{Definitions}
Given a function $f \colon \Gamma \to \R$ and $k \in \Gamma$ we define the left and right translations of $f$ by
\begin{align*}
    [k f](g) &= f(k^{-1}g)\\
    f^k(g) &= f(g k).
\end{align*}
For $k \in \Gamma$ the function $\delta_k \colon \Gamma \to \{0,1\}$ is the characteristic function of $\{k\}$. Note that $\delta_k = k\delta_e = \delta_e^{k^{-1}}$.

We denote convolution of measures by $*$, and denote by $\mu^{(n)}$ the $n$-fold convolution of $\mu$ with itself. We denote by $\mu * f$ the convolution of $\mu$ with the function $f$:
\begin{align*}
    [\mu * f](g) = \sum_h \mu(h) [h f](g) = \sum_h \mu(h) f(h^{-1} g).
\end{align*}
We say that $f$ is (left) $\mu$-harmonic if $\mu * f = f$.

Given an action of $\Gamma$ on a set $S$, and given a finitely additive measure $M$ on $S$, we denote by $g M$ the measure $[g M](A) = M(g^{-1}A)$. Given a finitely supported measure $\mu$ on $\Gamma$, we denote $\mu * M = \sum_{g \in \Gamma}\mu(g) g M$. We say that $M$ is $\mu$-stationary if $\mu * M = M$. The same definition applies to the special case of a Radon measure $\lambda$ on a topological space $X$ that $\Gamma$ acts on.

A non-negative function $f \colon \Gamma \to [0,\infty]$ can be identified with the countably additive measure on $\Gamma$ that assigns to $E \subseteq \Gamma$ the measure $\sum_{g \in E}f(g)$. We overload notation and also denote this measure by $f$, so that $f(E) = \sum_{g \in E}f(g)$.

\section{Proofs}

Since Theorem~\ref{thm:subsets} is used in the proof of Theorem~\ref{thm:radon}, we prove the former first.

\subsection{Proof of Theorem~\ref{thm:subsets}}

Given $\mu$, define the Green function $G \colon \Gamma \to [0,\infty]$ by
\begin{align*}
  G(g)=  \sum_{n=0}^\infty \mu^{(n)}(g).
\end{align*}
Since $\mu$ generates $\Gamma$ as a semigroup we have that $G > 0$. Note that for every $h \in \Gamma$ there is a constant $\varepsilon_h > 0$ such that for all $k \in \Gamma$
\begin{align}
    \label{eq:gamma_bounds}
    \varepsilon_hG^k \leq hG^k \leq \frac{1}{\varepsilon_h} G^k~~\text{ and }~~\varepsilon_h kG \leq kG^h \leq \frac{1}{\varepsilon_h} kG.
\end{align}

\begin{lemma}
    \label{lem:gamma}
    Let $A$ be a subset of $\Gamma$ such that $G(A) < \infty$. Then 
    \begin{align*}
        \inf \{ G^k(A) \,:\, k \in \Gamma\} = 0.
    \end{align*}
\end{lemma}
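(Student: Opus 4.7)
The plan is to study $f \colon \Gamma \to [0,\infty]$ defined by $f(k) := G^k(A) = \sum_{g \in A} G(gk)$. Under this reformulation, the hypothesis $G(A)<\infty$ becomes $f(e)<\infty$, and the goal becomes $\inf_k f(k) = 0$. First I would observe that $f$ is finite throughout $\Gamma$: the bounds in \eqref{eq:gamma_bounds} give $G(gk) \leq \varepsilon_k^{-1} G(g)$ for all $g, k$, so $f(k) \leq \varepsilon_k^{-1} f(e) < \infty$.

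The central algebraic input is a right-convolution version of the Poisson identity for $G$: reindexing the defining series gives
\[
\sum_h G(gh^{-1})\mu(h) = G(g) - \delta_e(g)
\]
for every $g \in \Gamma$. Substituting $g \mapsto gk$, multiplying by $\mathbbm{1}_A(g)$, and summing over $g$ (the interchange of non-negative sums is justified since $f$ is finite everywhere and $\mu$ has finite support) yields
\[
\sum_h f(kh^{-1})\mu(h) = f(k) - \mathbbm{1}_{A^{-1}}(k),
\]
where I write $\mathbbm{1}_{A^{-1}}(k) := \mathbbm{1}_{\{k^{-1} \in A\}}$. Iterating this identity $n$ times produces
\[
f(k) - \sum_h f(kh^{-1})\mu^{(n)}(h) = \sum_{j=0}^{n-1}\sum_h \mathbbm{1}_{A^{-1}}(kh^{-1})\mu^{(j)}(h).
\]

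Evaluating at $k=e$, the right-hand side simplifies to $\sum_{j=0}^{n-1}\mu^{(j)}(A)$, which increases to $G(A) = f(e)$ as $n\to\infty$; hence $\sum_h f(h^{-1})\mu^{(n)}(h) \to 0$. Since $\mu^{(n)}$ is a probability measure and $f\geq 0$, one has $\sum_h f(h^{-1})\mu^{(n)}(h) \geq \inf_{k\in\Gamma} f(k)$, so letting $n\to\infty$ forces $\inf_k f(k)=0$. The main obstacle I expect is spotting the right-convolution viewpoint: although $f$ is not left $\mu$-harmonic, the Poisson identity for $G$ transfers cleanly through the right translate $k \mapsto G(gk)$, giving a usable source identity for $f$, whose telescoping converts the finiteness of $G(A)$ into the required decay of a probabilistic average of $f$.
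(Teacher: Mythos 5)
Your proof is correct and is essentially the paper's argument in contrapositive form: the telescoping identity you derive, evaluated at $k=e$, amounts to $\sum_{k}G^{k}(A)\,\mu^{(n)}(k^{-1}) = \sum_{j\geq n}\mu^{(j)}(A)$, which is exactly the identity the paper uses (there phrased as a contradiction from $\inf_k G^k(A) = c > 0$ forcing $G(A)=\infty$). Your direct, non-contradiction presentation via the right-convolution source equation $G*\mu = G - \delta_e$ is a nice cleanup but is the same computation.
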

\begin{proof}
    Suppose towards a contradiction that $\inf \{G^{k}(A)\,:\, k \in \Gamma\} = c > 0$. Then for any $m \in \N$
    \begin{align*}
        \sum_{n=m}^\infty \mu^{(n)}(A)
        &= \sum_{n=0}^\infty [\mu^{(n)} * \mu^{(m)}](A)\\
        &= \sum_{n=0}^\infty \sum_{k \in \Gamma}\mu^{(n)}(A k) \cdot \mu^{(m)}(k^{-1})\\
        &= \sum_{k \in \Gamma}\mu^{(m)}(k^{-1})\sum_{n=0}^\infty \mu^{(n)}(A k) \\
        &= \sum_{k \in \Gamma}\mu^{(m)}(k^{-1})G^k(A) \\
        &\geq c.
    \end{align*}
    Hence
    \begin{align*}
        G(A) = \sum_{n=0}^\infty \mu^{(n)}(A) = \infty,
    \end{align*}
    and we have arrived at a contradiction.
\end{proof}

Recall that a finitely additive measure on $\Gamma$ is a function $M \colon 2^\Gamma \to [0,\infty]$ such that for all disjoint $A,B \subseteq \Gamma$ it holds that $M(A\cup B) = M(A)+M(B)$. Endow the set $[0,\infty]^{2^\Gamma}$ of all functions $2^\Gamma \to [0,\infty]$ on subsets on $\Gamma$ with the product topology, which is compact. Denote by $\cM \subset [0,\infty]^{2^\Gamma}$ the set of finitely additive measures. This is a closed subset, and hence also compact (but not necessarily sequentially compact). Given $A \subseteq \Gamma$, the set of measures $M \subseteq \cM$ such that $M(A)=1$ is a closed subset of $\cM$. 

Denote by $\cM_\mu$ the $\mu$-stationary measures $\{M \in \cM \,:\, \mu * M = M\}$. If $\mu$ is finitely supported, then $\cM_\mu$ is closed. To see this, note that 
\begin{align*}
    \cM_\mu = \bigcap_{E \subseteq \Gamma} \cM_\mu^E,
\end{align*}
where
\begin{align*}
    \cM_\mu^E = \left\{ M \in \cM\,:\, \sum_{g \in \supp \mu}\mu(g)M(g^{-1}E) = M(E)\right\}.
\end{align*}
Since $\mu$ is finitely supported, membership in $\cM_\mu^E$ depends on only finitely many coordinates $(g^{-1}E)_{g \in \supp E}$. Furthermore, its projection to these coordinates is a closed subset of $[0,\infty]^{\supp \Gamma}$. Hence each set in the intersection is closed, and thus their intersection is closed.

By the same argument, for $\varepsilon>0$ the set of measures
\begin{align*}
    \cM_\mu^{E,\varepsilon} = \left\{ M \in \cM\,:\, \sum_{g \in \supp \mu}\mu(g)M(g^{-1}E) \in M(E) + [-\varepsilon,+\varepsilon] \right\}
\end{align*}
is closed. Clearly, $\cM_\mu = \cap_{E,\varepsilon}\cM_\mu^{E,\varepsilon}$.

Suppose that $M \in \cM$ is a limit point of $\{M_1, M_2, \ldots\}$. To show that $M \in \cM_\mu$ (i.e., that $M$ is $\mu$-stationary), it suffices to show that $M \in \cM_\mu^{E,\varepsilon}$ for all $E \subseteq \Gamma$ and $\varepsilon >0$.

\begin{proof}[Proof of Theorem~\ref{thm:subsets}]
    As above, the Green function is given by $G(g) = \sum_{n=0}^\infty \mu^{(n)}(g)$, and we denote $G(A) = \sum_{g \in A}G(g)$.

    We consider two cases: either $G(A) = \infty$ or $G(A) < \infty$. 
    
    Suppose first that $G(A) = \infty$. For $n \in \N$ define the countably additive measure
    \begin{align}
        \label{eq:MK}
        M_n(E) = \frac{\sum_{m=0}^n \mu^{(m)}(E)}{\sum_{m=0}^n \mu^{(m)}(A)},
    \end{align}
    and let $M$ be a limit point of $\{M_1,M_2,\ldots\}$ in the (compact) space of finitely additive measures on $\Gamma$. Since for every $E \subseteq \Gamma$
    \begin{align*}
        |[\mu * M_n](E) - M_n(E)|
        &= \frac{|\mu^{(0)}(E) - \mu^{(n+1)}(E)|}{\sum_{m=0}^n \mu^{(m)}(A)}\\
        &\leq \frac{1}{\sum_{m=0}^n \mu^{(m)}(A)},
    \end{align*}
    and since the denominator tends to infinity by the assumption that $G(A)=\infty$, we have that $M \in \cM_\mu^{E,\varepsilon}$ for all $\varepsilon > 0$. Since this holds for every $E$, $M \in \cM_\mu$.

    Consider now the case that $G(A) < \infty$. In this case the argument above fails, as taking the limit in \eqref{eq:MK} yields the measure $M(E) = G(E)/G(A)$, which cannot be stationary (see Remark~\ref{remark:mk}). Instead, we pursue a different approach.
    
    Given $k \in \Gamma$ define the countably additive measure $M_k$ on $\Gamma$ by
    \begin{align*}
        M_k(E) = \frac{G^{k}(E)}{G^{k}(A)}.
    \end{align*}
    We have by \eqref{eq:gamma_bounds} that $0 < G^k(A) < \infty$, and so $M_k$ is well-defined with $M_k(A)=1$. Note that $G - [\mu * G] = \delta_e$, and more generally $G^k - [\mu * G]^k = \delta_{k^{-1}}$. Hence
    \begin{align}
        \label{eq:M_k}
        M_k(E) - [\mu * M_k](E) =  \frac{G^k(E) - [\mu * G^k](E)}{G^{k}(A)} = \frac{\delta_{k^{-1}}(E)}{G^{k}(A)}.
    \end{align}
    To complete the proof, we need to find a sequence of choices of $k$ such that this error vanishes. Of course, if we could ensure that $G^{k}(A)$ tends to infinity we would achieve this goal; however, this is in general not possible. Instead, our proof ``paradoxically'' relies on a sequence with the opposite property, namely that $G^{k}(A)$ vanishes (see Remark~\ref{remark:errors}).

    By Lemma~\ref{lem:gamma} there is a sequence  $k_1,k_2,\ldots$ in $\Gamma$ such that $ G^{k_n}(A) \to 0$ as $n\to\infty$. Let $M$ be a limit point of $\{M_{k_1},M_{k_2},\ldots\}$. To complete the proof we show that $M(E) = [\mu * M](E)$ for all $E \subseteq \Gamma$. 

    Consider first the case that $M(E) = \infty$. By \eqref{eq:gamma_bounds}, it holds for any $k \in \Gamma$ and $E \subseteq \Gamma$ that
    \begin{align*}
        [\mu * M_k](E) 
        &=  \frac{[\mu * G^k](E)}{G^{k}(A)}\\
        &= \frac{\sum_h \mu(h)[h G^k](E)}{G^{k}(A)}\\
        &\geq \frac{\sum_h \mu(h)\varepsilon_h  G^k(E)}{G^{k}(A)}\\
        &= M_k(E)\sum_h \mu(h)\varepsilon_h.
    \end{align*}
    Hence $[\mu * M](E) \geq M(E)\sum_h \mu(h)  = \infty$, and so $M \in \cM_\mu^E$. 

    Finally, consider the case that $M(E) < \infty$. Since $M$ is a limit point of $\{M_{k_1},M_{k_2},\ldots\}$, and since $\Gamma$ is countable, there is a  a subsequence $h_1,h_2,\ldots$ of $k_1,k_2,\ldots$ such that $\lim_n M_{h_n}(g E) = M(g E)$ for all $g \in \Gamma$. 
    (Since we are working with spaces that might not be sequentially compact, this subsequence may need to depend on the choice of $E$. This does not cause any problems.)
    In particular, $\lim_n M_{h_n}(E) = M(E)$ and $\lim_n [\mu * M_{h_n}](E) = [\mu * M](E)$.
    Since 
    \begin{align*}
      G^{k}(E) = G(E k) = \sum_g G(g)\delta_g(E k) \geq G(e) \delta_e(E k) = G(e)\delta_{k^{-1}}(E),
    \end{align*}
    it follows that
    \begin{align*}
        \limsup_n \frac{G(e)\delta_{h_n^{-1}}(E)}{G^{h_n}(A)} \leq \limsup_n \frac{G^{h_n}(E)}{G^{h_n}(A)}  = \lim_n M_{h_n}(E) = M(E) < \infty.
    \end{align*}
    Since $\lim_n G^{h_n}(A) = 0$ by construction and $G(e)>0$, we can conclude that $\delta_{h_n^{-1}}(E) = 0$ for all $n$ large enough. It then follows from \eqref{eq:M_k} that
    \begin{align}
        \label{eq:M_h}
        |[\mu * M_{h_n}](E) - M_{h_n}(E)| = \frac{\delta_{h_n^{-1}}(E)}{G^{h_n}(A)}.
    \end{align}
    But $\delta_{h_n^{-1}}(E) = 0$ for all $n$ large enough, and so
    \begin{align*}
        \lim_n|[\mu * M_{h_n}](E) - M_{h_n}(E)| = 0.
    \end{align*}
    Hence $[\mu * M_{h_n}](E) = M_{h_n}(E)$, and $M \in \cM_\mu^{E,\varepsilon}$ for all $\varepsilon > 0$. Since this holds for every $E$, $M \in \cM_\mu$.
\end{proof}
We end this section with two remarks regarding this proof.
\begin{remark}\label{remark:mk}
  Note that the measure constructed by taking the limits in \eqref{eq:MK} may fail to be stationary when the Green function assigns finite measure to $A$, and so a different argument is indeed needed in that case. For example, consider, as above, the case that $\mu$ is the step distribution of a transient random walk on some group $\Gamma$. Suppose $G(A)<\infty$. Then the limit $M$ of the measures $M_n$ given by \eqref{eq:MK} is given by $M(E) = G(E)/G(A)$, where $G$ is the Green function. Since $G$ vanishes at infinity, it is maximized at some $g$ and hence cannot be stationary, by the maximum principle. 
\end{remark}
\begin{remark}
\label{remark:errors}
In the case that the Green function $G$ assigns finite measure to the set $A$,  the error term that needs to be controlled is $\delta_{h_n^{-1}}(E)/G^{h_n}(A)$: we need the limit of this expression to vanish as $n$ tends to infinity; see \eqref{eq:M_h}. Of course, this would indeed be the case if $G^{h_n}(A)$ were to tend to infinity with $n$. However, we cannot guarantee that there exists a sequence $(h_n)_n$ with this property. Instead, we rely on the existence of a sequence with an opposite property, namely that $G^{h_n}(A)\to_n 0$. As this proof shows, this too works, because along such a sequence it is guaranteed that the denominator $\delta_{h_n^{-1}}(E)$ is eventually zero.
\end{remark}
\subsection{Proof of Theorem~\ref{thm:radon}}

Given a finitely additive measure $M$ on $\Gamma$, define the subspace
\begin{align*}
    L_M=\{f\in \ell^{\infty}(\Gamma)\,:\,M(\supp f)<\infty\} \subseteq \ell^\infty(\Gamma).
\end{align*}
Note that this is not a closed subspace. We equip it with the direct limit topology induced by the subspaces $(\ell^\infty(F))_F$, where $F$ ranges over all subsets of $\Gamma$ such that $M(F)<\infty$. 
\begin{lemma}
    \label{lem:functional}
    Let $M$ be a finitely additive measure on $\Gamma$. Then there is a continuous positive linear functional $I\colon L_M\to \R$ with $I(1_{E})=M(E)$ for all $E\subseteq \Gamma$ such that $M(E)<\infty$. If $M$ is $\mu$-stationary then $I$ is also $\mu$-stationary, i.e., $[\mu * I](f) = \sum_g \mu(g) I(g^{-1}f) = I(f)$ for all $f \in L_M$.
\end{lemma}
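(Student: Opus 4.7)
The plan is to construct $I$ as the Dunford-Schwartz integral against the finitely additive measure $M$. First, I would define $I$ on the vector space $S_M$ of \emph{simple} functions --- finite linear combinations $\phi = \sum_{i=1}^n c_i 1_{E_i}$ with the $E_i$ disjoint and $M(E_i) < \infty$ --- by setting $I(\phi) = \sum_i c_i M(E_i)$. A standard refinement argument (any two representations of $\phi$ share a common refinement by disjoint intersections) shows this is well-defined and linear; positivity is immediate from positivity of $M$, and one obtains the key bound $|I(\phi)| \leq \|\phi\|_\infty M(F)$ whenever $\supp \phi \subseteq F$ and $M(F) < \infty$.

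Next, to extend $I$ to all of $L_M$, fix $f \in L_M$ with $\supp f \subseteq F$ and $M(F) < \infty$. Partition the range $[-\|f\|_\infty, \|f\|_\infty]$ into $n$ equal intervals and pull back this partition to build simple functions $\phi_n \in S_M$ with $\supp \phi_n \subseteq F$ and $\|\phi_n - f\|_\infty \to 0$. The bound applied to $\phi_n - \phi_m$ shows $\{I(\phi_n)\}$ is Cauchy, so I set $I(f) = \lim_n I(\phi_n)$. Independence of the approximating sequence, linearity, and positivity all transfer from $S_M$, and the bound extends to $|I(f)| \leq \|f\|_\infty M(F)$. Thus the restriction $I|_{\ell^\infty(F)}$ is norm-continuous for every $F$ with $M(F) < \infty$, which is exactly what continuity in the direct limit topology on $L_M$ requires.

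For the $\mu$-stationarity claim, assume $M = \mu * M$. If $\supp f \subseteq F$ with $M(F) < \infty$, then the identity $M(F) = \sum_{g \in \supp \mu} \mu(g) M(g^{-1}F)$ forces $M(g^{-1} F) < \infty$ for every $g \in \supp \mu$, so each translate $g^{-1} f$ lies in $L_M$ with support in $g^{-1}F$. For an indicator $f = 1_E$ with $M(E) < \infty$, the computation $g^{-1} 1_E = 1_{g^{-1}E}$ together with $\mu$-stationarity of $M$ gives
\begin{equation*}
\sum_{g} \mu(g)\, I(g^{-1} 1_E) \;=\; \sum_g \mu(g) M(g^{-1} E) \;=\; [\mu * M](E) \;=\; M(E) \;=\; I(1_E).
\end{equation*}
By linearity this extends to all simple functions, and since $\mu$ has finite support, the uniform convergence $\phi_n \to f$ on $F$ implies uniform convergence $g^{-1}\phi_n \to g^{-1} f$ on $g^{-1}F$ for each of the finitely many $g \in \supp \mu$, so the identity passes to the limit: $\sum_g \mu(g) I(g^{-1} f) = I(f)$. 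The only mild technical obstacle is the absence of a dominated convergence theorem in the finitely additive setting, which is precisely what forces us to restrict to $L_M$ and to use uniform --- rather than pointwise or dominated --- approximation throughout.
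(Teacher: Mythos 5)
Your proof is correct, and it takes essentially the same route as the paper: build the integral against $M$ over sets of finite $M$-measure, and transfer $\mu$-stationarity from $M$ to $I$ by first checking it on indicators and then extending by linearity and density. The one real difference is stylistic: the paper cites the $ba(\Gamma)$--$\ell^\infty$ duality (their reference to \cite[Corollary 14.11]{guide2006infinite}) to obtain a unique positive bounded functional $I_F$ on each $\ell^\infty(F)$ with $M(F)<\infty$, and then patches these together by $I(f)=I_{\supp f}(f)$ using that $I_{F_2}$ extends $I_{F_1}$ when $F_1\subseteq F_2$; you instead re-derive that representation theorem from scratch via the Dunford--Schwartz construction (simple functions, then uniform approximation, with the bound $|I(f)|\le\|f\|_\infty M(F)$). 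Both give exactly the same functional, and your bound is exactly what continuity in the direct limit topology requires.

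One point where you are actually more careful than the paper: you note that if $M$ is $\mu$-stationary and $M(F)<\infty$, then $M(F)=\sum_{g\in\supp\mu}\mu(g)M(g^{-1}F)$ forces $M(g^{-1}F)<\infty$ for every $g\in\supp\mu$, so that $g^{-1}f\in L_M$ and the expression $I(g^{-1}f)$ is well-defined. The paper leaves this implicit. You also correctly observe that passing the stationarity identity to the limit uses only uniform convergence of the finitely many translates $g^{-1}\phi_n\to g^{-1}f$ (finiteness of $\supp\mu$ matters here), so there is no appeal to any form of dominated convergence, which indeed is unavailable for finitely additive measures. This is a sound and complete argument.
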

\begin{proof}
For each set $F\subseteq \Gamma$ with $M(F)<\infty$, let $L_{F}=\{f\in \ell^{\infty}(\Gamma)\,:\,\supp f \subseteq F\}$. Notice that the collection $\{L_{F}\}_{F}$ is partially ordered by set inclusion, and that $\bigcup_F L_F = L_M$. 

Since the restriction of $M$ to subsets of $F$ is a finite, finitely additive measure, there exists a unique positive, bounded linear functional on $\ell^\infty(F)$ that represents $M$ (see, e.g., \cite[Corollary 14.11]{guide2006infinite}). Since $L_F$ is isomorphic to $\ell^\infty(F)$ by the restriction map, it follows that there is a unique positive linear functional $I_F \colon  \to L_F$ such that $I_F(1_E) = M(E)$ for all $E \subseteq F$. Define $I \colon L_M \to R$ by $I(f) = I_{\supp f}(f)$. Then $I$ is linear because if $F_1 \subseteq F_2$ then $I_{F_2}$ extends $I_{F_1}$. It is clear that $I$ is positive. Continuity in the dual of direct limit topology on $L_M$ follows by construction.

If $M$ is $\mu$-stationary then
\begin{align*}
    \sum_g \mu(g) I(g^{-1} 1_E)
    = \sum_g \mu(g) M(g^{-1} E)
    = [\mu * M](E)
    = M(E) = I(1_E).
\end{align*}
The result follows from the linearity of $I$, and from the fact that simple functions are dense in $\ell^\infty$.
\end{proof}

\begin{proof}[Proof of Theorem~\ref{thm:radon}]
By assumption, there is a compact $K\subseteq X$ such that $\bigcup_{g\in \Gamma}g K=X$. By perhaps choosing a larger compact set $K$, we may assume that $\bigcup_{g\in \Gamma}g \interior{K}=X$, where $\interior{K}$ denotes the interior of $K$.

Fix some $x \in X$. For $Y \subset X$ define $A(Y)=\{g\in \Gamma\,:\, g x\in Y\}$. Note that the map $Y \mapsto A(Y)$ is equivariant: $A(g Y) = g A(Y)$. It also satisfies $A(\cup_i Y_i) = \cup_i A(Y_i)$ and is monotone so that if $Y_1 \subseteq Y_2$ then $A(Y_1) \subseteq A(Y_2)$. 

By Theorem~\ref{thm:subsets}, there exists a $\mu$-stationary, finitely additive measure $M$ on $\Gamma$ with $M(A(K))=1$. Given $f \in C_c(X,\R)$, let $\hat f\in \ell^{\infty}(\Gamma)$ be given by $\hat f(g)= f(g x)$. Note that for $h \in \Gamma$
\begin{align*}
    \widehat{[h f]}(g) = [h f](g x) = f(h^{-1}g x) = \hat f(h^{-1}g) = [h \hat f](g).
\end{align*}

Let $L_M \subseteq \ell^\infty(\Gamma)$ be the linear space defined in the statement of Lemma~\ref{lem:functional}. We claim that $\hat f \in L_M$. To see this, suppose that the support of $f$ is contained in a compact set $K'\subseteq X$. Then there exists a finite $F\subseteq \Gamma$ such that $K'\subseteq \bigcup_{g\in F}g \interior{K}$, by compactness and since $g \interior{K}$ is open. Then 
\begin{align*}
    \supp \hat f
    = \{g \in \Gamma\,:\, f(g x) \neq 0\}
    \subseteq M(\{g \in \Gamma\,:\, g x \in K'\})
    = A(K').
\end{align*}
Since the map $Y \mapsto A(Y)$ is monotone, it follows that 
\begin{align*}
    M(\supp \hat f)
    &\le M\left(A\left(\cup_{g\in F}g \interior{K}\right)\right)\\
    &= M\left(\cup_{g \in F} A(g \interior{K})\right)\\
    &\leq \sum_{g \in F} M(A(g \interior{K}))\\
    &= \sum_{g \in F} M(g A(\interior{K})).
\end{align*}
By $\mu$-stationarity, for any $E \subseteq \Gamma$ and $g \in \Gamma$, $M(g E) \leq \frac{1}{\max_n\mu^{(n)}(g)}M(E)$, which is finite because $\mu^{(n)}(g) > 0$ for some $n$. Hence
\begin{align*}
    M(\supp \hat f)
    \le \frac{1}{\min_{g \in F}\max_n\mu^{(n)}(g)}\cdot |F|\cdot M(A(K))
    <\infty.
\end{align*}
Hence $\hat f \in L_M$. We further claim that the map $f \mapsto \hat f$ is continuous, in the direct limit topologies on $L_M$ and $C_c(X,\R)$. This follows from the discussion above, which shows that restricted to functions $f$ supported on some compact $K' \subseteq X$, this map is an isometry into $\ell^\infty(A(K'))$.

Let $I\colon L_M\to \R$ be the linear functional established in Lemma~\ref{lem:functional}. We define a  linear functional $\hat I\colon C_c(X,\R)\to \R$ by $\hat I(f)=I(\hat{f})$. We claim that $\hat I$ is $\mu$-stationary by the $\mu$-stationarity of $I$. To see this, observe that
\begin{align*}
  [\mu * \hat I](f)
  = \sum_{g}\mu(g)\hat I(g^{-1}f)
  =\sum_{g}\mu(g)I(\widehat{[g^{-1}f]})
  =\sum_{g}\mu(g)I(g^{-1}\hat f),
\end{align*}
which is equal to $I(\hat{f})$ (by the $\mu$-stationarity of $I$), which, by definition, is equal to $\hat I(f)$. We claim that $\hat I$ is continuous in the standard, direct limit topology on $C_c(X,\R)$. This follows from the continuity of $I$ in the direct limit topology on $L_M$, and the fact that the map $f \mapsto \hat f$ is continuous.

Thus, by the Riesz-Markov-Kakutani representation theorem, there is a Radon measure $\lambda$ such that $\hat I(f)= \int_{X} f \,\dd\lambda$ for any $f\in C_c(X,\R)$. We claim $\lambda$ is $\mu$-stationary and nonzero. Since $\hat I$ is $\mu$-stationary, we get 
\begin{align*}
    \lambda(E)=\int_{X} 1_{E}d\lambda &= \hat I(1_{E})\\ 
    &=[\mu*\hat I](1_E) \\ 
    &=\sum_{g} \mu(g)\hat I(1_{g^{-1}E}) \\ 
    &= \sum_{g} \mu(g)\int_{X} 1_{g^{-1}E}d\lambda\\ 
    &=[\mu*\lambda](E).
\end{align*}
Let $\epsilon > 0$. By outer regularity of $\lambda$, there exists an open set $U\supseteq K$ such that $\lambda(U)\le \lambda(K) + \epsilon$. Because $C_c(X,\R)$ is $\ell^{\infty}$ dense in the space of functions $X\to \mathbb{R}$, there exists a function $f \in C_c(X,\R)$ such that $1_{K}\le f \le 1_{U}$. Then $\hat{f}\ge \widehat{1_{K}} = 1_{A(K)}$ whence 
\begin{align*}
    \hat I(f)=I(\hat{f})\ge I(1_{A(K)})=M(A(K))=1.
\end{align*}
Then 
\begin{align*}
    \lambda(K)+\epsilon\ge \lambda(U) = \hat I(1_{U})\ge \hat I(f)\ge 1.
\end{align*}
Choosing any $0 < \epsilon  < 1$, we find that $\lambda(K)>0$. In particular, $\lambda$ is nontrivial.
\end{proof}

\subsection{$\mu$-light subsets of $\Gamma$}

For $d \geq 2$, let $\Gamma = \bF_d = \langle S \rangle$ be the free group generated by $S = \{a_1,\ldots,a_d\}$. Denote the geometric boundary of $\bF_d$ by $\partial \Gamma$. This is the set of infinite reduced words $s_1s_2\cdots \in (S \cup S^{-1})^\N$, where each $s_i \in S \cup S^{-1}$, and $s_i \neq s_{i+1}^{-1}$. Each $w \in \partial \Gamma$ defines a geodesic in the Cayley graph defined by $S$, starting at $e \in \Gamma$ and given by $(e, s_1, s_1 s_2, s_1 s_2 s_3,\ldots)$.

Given $w=s_1s_2\cdots \in \partial \Gamma$ and $g \in \Gamma$, denote the distance between $g$ and the geodesic defined by $w$
\begin{align*}
  D(g,w) = \min_{i \geq 0}|g^{-1} \cdot s_1s_2 \cdots s_i|.
\end{align*}
Here $|\cdot|$ denotes the word length norm on $\Gamma$ defined by the generating set $S$. We can identify $w$ with the function $f_w \colon \Gamma \to \R$ given by
\begin{align}
\label{eq:fw}
  f_w(g) = (2d-1)^{|g^{-1}|- 2D(g^{-1},w)}.
\end{align}
This is the function that attains the value $(2d-1)^i$ on the inverse of the $i$\textsuperscript{th} element of the geodesic defined by $w$, and decays exponentially by a factor of $2d-1$ as one moves away from this geodesic (in the left Cayley graph). Note that $f_w$ is positive and (left) $\mu$-harmonic.

The functions $\{f_w\}_{w \in \partial \Gamma}$ form the Martin boundary of the $\mu$-random walk on $\Gamma$ (see, e.g., \cite{ancona2007positive}). This implies that they contain (and, in the case of the free group, are equal to) the extreme points of the set of positive $\mu$-harmonic functions that attain unity at the identity. Hence, for every positive $\mu$-harmonic function $f$ there exists a probability measure $\zeta$ on $\partial \Gamma$ such that
\begin{align*}
  f = f(e)\int_{\partial \Gamma}f_w\,\dd\zeta(w).
\end{align*}
It follows that if $f(A) < \infty$ then $f_w(A) < \infty$ for $\zeta$-almost every $w$. In particular, there is some $w$ such that $f_w(A) < \infty$. Thus, if $A \subseteq \Gamma$ is $\mu$-light, this is witnessed by some $f_w$ satisfying $f_w(A) < \infty$. So to show that $A$ is not $\mu$-light it suffices to show that $f_w(A) = \infty$ for all $w \in \partial \Gamma$.

Consider the set $A = \sigma(\Gamma)$, where $\sigma \colon \Gamma \to \Gamma$ is given by \eqref{eq:sigma} (or any other function satisfying the conditions in the paragraph before \eqref{eq:sigma}). Since $|\sigma(g)| = 2|g|$, we have that $|A \cap B_{2r}| = |B_r| = (2d)(2d-1)^{r-1}$. Hence $\underline{\mathfrak{g}}=\overline{\mathfrak{g}}=\sqrt{2d-1}$. Given any $w=s_1s_2\cdots \in \partial \Gamma$, let $g_i = (s_1s_2\cdots s_i)^{-1}$, note that  $|\sigma(g_i)| = 2i$, and that $D(g_i^{-1},w) \leq i$. Hence $f_w(\sigma(g_i)) \geq 1$ and since $\sigma(g_i) \in A$, $f_w(A) \ge \sum_{i\ge 1} f_w(\sigma(g_i)) = \infty$. This completes the proof of Claim~\ref{clm:sigma}.

\begin{proof}[Proof of Proposition~\ref{prop:small}]
  Let $\zeta$ be the hitting measure of the $\mu$-random walk on $\partial \Gamma$, also known as the harmonic measure on $\partial \Gamma$. This is the probability measure in which we draw $w = s_1s_2\cdots$ by choosing $s_1$ uniformly at random from $S \cup S^{-1}$, and conditioned on $s_1,s_2,\ldots,s_i$, choose $s_{i+1}$ uniformly from $(S \cup S^{-1})\setminus s_i^{-1}$. Formally, given a finite reduced word $t_1t_2\cdots t_n$ with each $t_i \in S \cup S^{-1}$, 
  \begin{align*}
    \zeta(\{w = s_1s_2\cdots \,:\, s_1=t_1, \ldots, s_n=t_n\}) = 1/\left(2d\cdot (2d-1)^{n-1}\right).
  \end{align*}

  Suppose $\overline{\mathfrak{g}}(A) < \sqrt{2d-1}$. We claim that for $\zeta$-almost every $w$ it holds that $f_w(A) < \infty$, which in particular proves that there exists a measure with the required properties, so that $A$ is $\mu$-light. To this end, we show that
  \begin{align}
    \label{eq:zeta}
    \int_{\partial \Gamma}\sum_{g \in A}\sqrt{f_w(g)}\,\dd\zeta(w) < \infty.
  \end{align}
  This implies that $\sum_{g \in A}\sqrt{f_w(g)} < \infty$, $\zeta$-almost surely, which implies that $f_w(A) = \sum_{g \in A}f_w(g) < \infty$, $\zeta$-almost surely.

  Denote $S_r  =\{ g\in \Gamma\,:\, |g|=r\}$. Fix some $g \in S_r$. Then 
  \begin{align}
    \label{eq:int-sum-zeta}
    \int_{\partial \Gamma} \sqrt{f_w(g)}\,\dd \zeta(w) = \frac{1}{|S_r|}\sum_{h \in S_r}\sqrt{f_w(h)},
  \end{align}
  for any $w \in \partial \Gamma$, by the spherical symmetry of the measure $\zeta$.

  Now, on $S_r$, $f_w$ attains its maximum $(2d-1)^r$ at the single point $h_0$ which coincides with the $r$-prefix of $w$. At any other point $h$,
  \begin{align*}
    f_w(h) = f_w(h_0) (2d-1)^{-|h h_0^{-1}|} = (2d-1)^{r-|h h_0^{-1}|},
  \end{align*}
  Now, the number of elements $h \in S_r$ such that $|h h_0^{-1}|=t$ is $0$ if $t$ is odd or larger than $2r$, and $(2d-1)^{t/2}$ otherwise. Hence
  \begin{align*}
    \sum_{h \in S_r}\sqrt{f_w(h)}
    &= \sum_{t \in \{0,2,\ldots,2r\}}|S_r \cap \{h \,:\, |h h_0^{-1}|=t\}|(2d-1)^{r/2-t/2}\\
    &= \sum_{t \in \{0,2,\ldots,2r\}}(2d-1)^{t/2}(2d-1)^{r/2-t/2}\\
    &= (2d-1)^{r/2}r,
  \end{align*}
  so that 
  \begin{align*}
    \frac{1}{|S_r|}\sum_{h \in S_r}\sqrt{f_w(h)} = 2d(2d-1)^{-(r-1)}(2d-1)^{r/2}r = 2d(2d-1)^{-r/2+1} r.
  \end{align*}
  It follows by \eqref{eq:int-sum-zeta} that 
  \begin{align*}
    \int_{\partial \Gamma}\sum_{g \in A}\sqrt{f_w(g)}\,\dd\zeta(w)
    &= \sum_r \sum_{g \in A \cap S_r}\int_{\partial \Gamma}\sqrt{f_w(g)}\,\dd\zeta(w)\\
    &= \sum_r \sum_{g \in A \cap S_r}2d(2d-1)^{-r/2+1} r\\
    &= \sum_r |A \cap S_r|2d(2d-1)^{-r/2+1} r,
  \end{align*}
  which is finite if $\overline{\mathfrak{g}} < \sqrt{2d-1}$. Thus \eqref{eq:zeta} holds and the proof is complete.
\end{proof}

\subsection{Necessity of co-compactness in Theorem~\ref{thm:radon}}

Recall from \S\ref{sec:small} that $A \subseteq \Gamma$ is $\mu$-light if $f(A) = \sum_{g \in A}f(g) < \infty$ for some positive, left $\mu$-harmonic $f$. \begin{lemma} \label{lem:seq}
Let $X$ be a locally compact Hausdorff $\Gamma$-space. Suppose that there exists a sequence of compact subsets $K_n \subseteq X$, and a sequence of subsets $A_n \subseteq \Gamma$  with the following properties:
\begin{enumerate}[(i)]
    \item Each $A_n$ is not $\mu$-light.
    \item $(g^{-1} K_n)_{g\in A_n}$ are pairwise disjoint subsets of $K_{n+1}$.
    \item $X=\cup_n\cup_{g \in \Gamma}gK_n$.
\end{enumerate}
Then there is no nonzero $\mu$-stationary Radon measure on $X$. 
\end{lemma}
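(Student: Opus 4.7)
The plan is to argue by contradiction. Suppose $\lambda$ is a nonzero $\mu$-stationary Radon measure on $X$; from it I will construct a positive, countably additive, $\mu$-stationary measure on $\Gamma$ of finite mass on $A_n^{-1}$, which by the natural inversion $g\mapsto g^{-1}$ contradicts the non-$\mu$-lightness of $A_n$ asserted in~(i).

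First, for each $n$ I define $f_n\colon\Gamma\to[0,\infty)$ by $f_n(g)=\lambda(gK_n)$; this is finite because $K_n$ is compact and $\lambda$ is Radon. Applying the stationarity identity $\lambda(Y)=\sum_h\mu(h)\lambda(h^{-1}Y)$ to $Y=gK_n$ yields $f_n=\mu*f_n$, so $f_n$ is left $\mu$-harmonic. Next I would verify that for $n$ sufficiently large, $f_n$ is strictly positive on all of $\Gamma$. Since $\lambda\neq 0$, hypothesis~(iii) provides some $g_0$ and $n_0$ with $\lambda(g_0K_{n_0})>0$; iterating harmonicity as $f_{n_0}=\mu^{(m)}*f_{n_0}$ and using that $\supp\mu$ generates $\Gamma$ as a semigroup produces an $m$ with $\mu^{(m)}(g_0^{-1})>0$, giving $f_{n_0}(e)=\lambda(K_{n_0})>0$, and the maximum principle makes $f_{n_0}$ strictly positive pointwise. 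Condition~(ii) together with the nonemptiness of $A_n$ (forced by (i)) then propagates this positivity to all $n\geq n_0$, since for any $g\in A_n$ we have $f_{n+1}(e)=\lambda(K_{n+1})\geq \lambda(g^{-1}K_n)=f_n(g^{-1})>0$.

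The heart of the argument is the summability bound extracted from~(ii): since $(g^{-1}K_n)_{g\in A_n}$ is a pairwise disjoint family inside $K_{n+1}$,
\begin{align*}
    \sum_{g\in A_n}f_n(g^{-1})
    = \sum_{g\in A_n}\lambda(g^{-1}K_n)
    \leq \lambda(K_{n+1})
    = f_{n+1}(e) < \infty.
\end{align*}
Reindexing via $h=g^{-1}$, the countably additive $\mu$-stationary measure $M$ on $\Gamma$ with density $f_n$ satisfies $M(A_n^{-1})<\infty$, and after normalization produces the desired $\mu$-lightness witness contradicting~(i). The main technical obstacle is the orientation bookkeeping: the left $\mu$-harmonic function $f_n(g)=\lambda(gK_n)$ naturally pairs, via reindexing by inverses, with the translates $g^{-1}K_n$ appearing in~(ii); beyond this, the only other nontrivial ingredient is the maximum-principle propagation of pointwise positivity of $f_n$ through the induction on $n$.
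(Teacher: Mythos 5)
The core definitions and the positivity argument are sound and match the paper's own proof: $f_n(g)=\lambda(gK_n)$ is left $\mu$-harmonic, and nondegeneracy of $\mu$ forces $f_n$ to be strictly positive once $\lambda(g_0K_{n_0})>0$ for some $g_0$. However, there is a genuine gap in the closing step. From hypothesis (ii) as literally stated you extract the bound
\begin{align*}
\sum_{g\in A_n}f_n(g^{-1}) \;=\; f_n(A_n^{-1}) \;\leq\; \lambda(K_{n+1}) \;<\;\infty,
\end{align*}
which witnesses that $A_n^{-1}$ is $\mu$-light; but hypothesis (i) is about $A_n$, not $A_n^{-1}$. Your appeal to ``the natural inversion $g\mapsto g^{-1}$'' does not close this: if $f$ is left $\mu$-harmonic, meaning $\sum_h\mu(h)f(h^{-1}g)=f(g)$, then $\tilde f(g):=f(g^{-1})$ satisfies $\sum_h\mu(h)\tilde f(gh)=\tilde f(g)$, which is a \emph{right} convolution identity, not the left one that enters the paper's definition of a $\mu$-stationary measure on $\Gamma$. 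So $\tilde f$ is not, in general, left $\mu$-harmonic, $\mu$-lightness of $A$ and of $A^{-1}$ are unrelated notions, and no contradiction with (i) follows.

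For what it is worth, the paper's own proof estimates $\sum_{g\in A_n}\lambda(gK_n)$ against $\lambda(K_{n+1})$, i.e.\ it effectively reads (ii) as ``$(gK_n)_{g\in A_n}$ are pairwise disjoint subsets of $K_{n+1}$,'' with no inverse. Under that reading the bound becomes $f_n(A_n)\le \lambda(K_{n+1})<\infty$, which contradicts (i) immediately, and no inversion is needed. With that repair the rest of your argument (positivity propagation, the summability bound, the final appeal to (iii)) goes through as written; the propagation to all $n\ge n_0$ is more than necessary, since a single $n$ with $\lambda(K_n)>0$ already produces the contradiction, and once every $\lambda(gK_n)$ vanishes, (iii) plus countable subadditivity gives $\lambda=0$.
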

\begin{proof}
    Suppose $\lambda$ is a $\mu$-stationary Radon measure on $X$. Since $\lambda$ is Radon and $K_n$ is compact, $\lambda(K_n)<\infty$. Note
    that $f(g) = \lambda(g K_n)$ is (left) $\mu$-harmonic. Suppose $\lambda(K_n) > 0$, so that $f$ is positive. Then (\textit{i}) implies that
    \begin{align*}
        \sum_{g \in A_n}\lambda(g K_n) = f(A_n) = \infty.
    \end{align*}
    But then, by (\textit{ii}), $\lambda(K_{n+1}) \geq \sum_{g \in A_n}\lambda(g K_n) = \infty$, which is impossible because $\lambda$ is Radon. Hence we have that $f=0$, and so also $\lambda(g K_n)=0$ for all $g \in \Gamma$. Finally, by (\textit{iii}),  $\lambda(X)\le \sum_{n,g\in A_n}\lambda(gK_n)=0$.
\end{proof}
\begin{lemma}\label{lem:subsets}
There exist subsets $A_1,A_2,\ldots$ of $\mathbb{F}_d$ such that, for every $n$, $e\in A_n$, each $A_n$ is not $\mu$-light, and the map $\psi_n\colon A_1\times \cdots\times A_n\to \Gamma$ given by $\psi_n(g_1,\ldots, g_n)=g_1\cdots g_n$ is injective.
\end{lemma}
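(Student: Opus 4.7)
My plan is to construct the $A_n$'s inductively, using the non-$\mu$-light palindrome set $P=\sigma(\mathbb F_d)$ of Claim~\ref{clm:sigma} as the starting point. For the base case I would set $A_1=\{e\}\cup P$, which is non-$\mu$-light, contains $e$, and makes $\psi_1$ trivially injective. Having constructed $A_1,\dots,A_{n-1}$, I would define $A_n$ as a modification of $P$: for instance a right translate $\{e\}\cup(Pk_n)$ for a carefully chosen $k_n\in\Gamma$, or a palindrome-like set $\{e\}\cup\sigma_n(\Gamma)$ in which an index-$n$-specific structural marker is built into each element.

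To verify that $A_n$ is not $\mu$-light, I would adapt the argument of Claim~\ref{clm:sigma}. For the translate construction this is immediate: the map $f\mapsto f(\cdot\,k_n)$ preserves positive left $\mu$-harmonicity, so $f(Pk_n)=f(\cdot\,k_n)(P)=\infty$ for every such $f$. For the marker construction, I would show that for each $w\in\partial\Gamma$ there are infinitely many elements of $A_n$ near the geodesic of $w$, so that the Martin kernel $f_w$ is bounded below on an infinite subset of $A_n$.

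Injectivity of $\psi_n$ reduces to the non-collision condition $A_nA_n^{-1}\cap(H_{n-1}^{-1}H_{n-1}\setminus\{e\})=\emptyset$, where $H_{n-1}=\mathrm{image}(\psi_{n-1})$. I would arrange this either by selecting the parameter $k_n$ to avoid a prescribed countable set of forbidden translates, or by removing from a candidate $A_n$ a subset of "colliding" elements. The latter preserves non-$\mu$-lightness provided the removed subset is itself $\mu$-light, since a non-$\mu$-light set minus a $\mu$-light subset remains non-$\mu$-light (the union of two $\mu$-light sets is $\mu$-light, by adding the witnessing positive harmonic functions).

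The hard part will be reconciling non-$\mu$-lightness, which forces $A_n$ to have "mass in every direction" of the Gromov boundary (resembling $P$ itself), with injectivity of $\psi_n$, which demands rigid combinatorial constraints enabling unique parsing of products. The naive translate $A_n=\{e\}\cup Pk_n$ already satisfies $A_nA_n^{-1}\supseteq PP$, which is typically too large to avoid intersecting $H_{n-1}^{-1}H_{n-1}$; so a more refined construction exploiting the tree structure of the Cayley graph of $\mathbb F_d$, likely via carefully placed index-specific internal markers, will be needed. The delicate point will be verifying that such markers preserve the non-$\mu$-light property uniformly across every boundary direction $w\in\partial\Gamma$, even those "parallel" to the marker, which typically causes naive marker constructions to fail.
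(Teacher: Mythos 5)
Your proposal correctly identifies the core tension --- non-$\mu$-lightness pushes $A_n$ to have mass in every boundary direction, while injectivity of $\psi_n$ demands that products parse uniquely --- but it stops at a sketch precisely at the point where the real work begins: you acknowledge that the naive translate $\{e\}\cup Pk_n$ fails and that a ``more refined construction exploiting the tree structure'' with ``carefully placed index-specific internal markers'' is needed, without supplying one, and you flag as an open worry whether such markers can preserve non-$\mu$-lightness uniformly over all $w\in\partial\Gamma$. That is a genuine gap, not a routine detail. A second, smaller issue: your reduction argument invokes the claim that a union of two $\mu$-light sets is $\mu$-light ``by adding the witnessing positive harmonic functions,'' but if $f_1(C)<\infty$ and $f_2(D)<\infty$ then $(f_1+f_2)(C\cup D)$ involves the cross terms $f_1(D)$ and $f_2(C)$, which may be infinite, so the sum does not obviously witness lightness of $C\cup D$. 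This step would need justification before the ``remove a $\mu$-light colliding subset'' route could be used.

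The paper's construction dissolves the tension rather than fighting it, by starting from a \emph{thick} set instead of a thin one. Fix a generator $a$ and let $A_a^a$ be the reduced words that begin and end with $a$; then for $g,h\in A_a^a$ one has $|gh|=|g|+|h|$ with no cancellation, and $|A_a^a\cap S_r|\ge(2d-1)^{r-3}$. Choosing an injection $r\colon\N^2\to\N$ and setting $A_n=\{e\}\cup\bigcup_m(A_a^a\cap S_{2^{r(n,m)}})$, non-$\mu$-lightness is immediate: since $f_w(g)\ge(2d-1)^{-|g|}$, each sphere contributes mass $\ge(2d-1)^{-3}$ to $f_w(A_n)$ for \emph{every} $w\in\partial\Gamma$, so $f_w(A_n)=\infty$ --- no per-direction analysis is needed because spheres are rotationally symmetric. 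Injectivity is then purely combinatorial: additivity of lengths means $|g_1\cdots g_n|=\sum_i|g_i|$, the dyadic and $r$-injective choice of radii lets one read off each $|g_i|$ from the binary expansion of $|g_1\cdots g_n|$, and the no-cancellation property then recovers the factors themselves as consecutive blocks. In short, where you try to carve rigid markers into an exponentially thin set and then must rescue non-$\mu$-lightness, the paper takes an exponentially dense slab on each of a sparse set of spheres, getting non-$\mu$-lightness for free and encoding the ``marker'' entirely in the word \emph{lengths}.
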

\begin{proof}
Recall that we denote by $S_r$ the set of elements $\{g \in \Gamma \,:\, |g| = r\}$, where $|\cdot|$ is the standard word length norm. Let $a$ be one of the $d$ generators of $\Gamma$, so that $\mu(a) = 1/(2d)$, and let $A_a^a \subset \Gamma$ be the set of reduced words that begin and end with $a$. Note that for $g, h \in A_a^a$ it holds that 
\begin{align}
    \label{eq:additive}
    |g h| = |g| + |h|.
\end{align}

Let $r \colon \N^2 \to \N$ be an injection, and let 
\begin{align*}
    A_n = \cup_{m=1}^\infty (A_a^a \cap S_{2^{r(n,m)}})\cup \{e\}.
\end{align*}
By \eqref{eq:additive}, if $(g_1,\ldots,g_n) \in A_1 \times \cdots \times A_n$ then $|g_1 \cdots g_n| = |g_1| + \cdots+ |g_n|$, and so, by the definition of $(A_n)_n$, $g_1\cdots g_n$ determines each $|g_i|$; indeed, $|g_i|$ can be recovered from the binary representation of $|g_1 \cdots g_n|$. And since by the definition of $A_a^a$ there cannot be any cancellations in the product  $g_1\cdots g_n$, this product determines $(g_1,\cdots,g_n)$. Hence $\psi_n$ is injective.

It remains to be shown that $A_n$ is not $\mu$-light. To this end we proceed as in the proof of Claim~\ref{clm:sigma}, and complete the proof by showing that $f_w(A_n) = \infty$ for every $w \in \partial \Gamma$. Indeed, $f_w(g) \geq (2d-1)^{-|g|}$, by \eqref{eq:fw}. Now, $|A_a^a \cap S_r| \geq (2d-1)^{r-3}$, since $A_a^a$ only fixes the first and last letter.  Hence $f_w(g)(A_a^a \cap S_r) \geq (2d-1)^{-3}$, and thus $f_w(A_n) = \infty$.


\end{proof}

Given these lemmas, the proof of Proposition~\ref{prop:free-no-stationary} follows the construction in \cite[Proposition 4.3]{matui2015universal}.
\begin{proof}[Proof of Proposition~\ref{prop:free-no-stationary}]


Let $A_1,A_2,\ldots$ be a sequence furnished by 
Lemma ~\ref{lem:subsets}. We define  $B_n=\psi_n(A_1,\dots, A_n)=A_1  \cdots A_n$. Note that this sequence is increasing, since $e \in A_n$. Let $\mathcal{A}$ be the boolean algebra generated by $(A_n)_n$, $(B_n)_n$, and by their translates by elements of $\Gamma$. Let $\mathcal{S}$ be the Stone space associated with $\mathcal{A}$. Since $\mathcal{A}$ is countable, $\mathcal{S}$ is second countable, and moreover isomorphic to the Cantor space. And since $\mathcal{A}$ admits a $\Gamma$-action by translations, so does $\mathcal{S}$.

Given $A \in \mathcal{A}$, denote by $\mathcal{K}(A) \subseteq \mathcal{S}$ the natural identification of $A$ with the subset of $\mathcal{S}$ consisting of the ultrafilters that contain $A$. Note that $\mathcal{K}(A)$ is compact and open. Note also that if $A,B \in \mathcal{A}$ are disjoint then $\mathcal{K}(A)$ and $\mathcal{K}(B)$ are disjoint.

Let
\begin{align*}
    X = \bigcup_n \bigcup_{g \in \Gamma} g\mathcal{K}(B_n).
\end{align*}
Note that $X$, as a countable union of compact open sets, is an open subset of $\mathcal{S}$, and hence locally compact, second countable and $\sigma$-compact. It is also invariant to the $G$-action, by construction.

We now show that $K_n=\mathcal{K}(B_n)\subseteq X$ and $A_{n}\subseteq \Gamma$ satisfy the conditions of Lemma~\ref{lem:seq}, which will complete the proof. Note that (\textit{i}) is satisfied by our choice of $A_n$. Further, if $g,h$  are distinct elements of $A_{n+1}$, then $g B_n$ and $h B_n$ are disjoint,  by the injectivity of $\psi_n$. It follows that $gK_{n} = g\mathcal{K}(B_n) = \mathcal{K}(g B_n)$ and $h K_{n}$ are disjoint, which shows (\textit{ii}). Finally, (\textit{iii}) follows immediately from the definition of $X$.

\end{proof}

\subsection*{Acknowledgements} This work was supported in part by NSF grants DMS-2246494 (TH), DMS-1944153 (OT), and DMS-2348143 (TZ). We thank Nicolas Monod and an anonymous referee for helpful comments, and in particular for suggesting the current, stronger version of Propsition~\ref{prop:free-no-stationary}.

\bibliography{refs.bib}

\end{document}